\patchcmd{\@mn@margintest}{\@tempswafalse}{\@tempswatrue}{}{}
\patchcmd{\@mn@margintest}{\@tempswafalse}{\@tempswatrue}{}{}
\newfont{\NUMBERS}{msbm8 scaled\magstep1}
\newcommand{\REAL}{\mbox{\NUMBERS R}}
\newcommand{\Vect}[2][]
{
  \ifthenelse{\equal{#1}{}}
  {\boldsymbol{#2}}
  {{#2}_{#1}}
}
\newcommand{\Matr}[2][]
{
  \ifthenelse{\equal{#1}{}}
  {{#2}}
  {{#2}_{#1}}
}
\newcommand{\Image}[1]{\mbox{Im}(#1)}
\newcommand{\Var}{\ensuremath{\operatorname{var}}}
\newcommand{\Err}{\ensuremath{\operatorname{err}}}
\newcommand{\Tendsto}{\rightarrow}
\newcommand{\dx}{\, dx}
\DeclareMathOperator{\Div}{\operatorname{div}}
\DeclareMathOperator{\Grad}{\nabla}
\DeclareMathOperator{\Sign}{\operatorname{sign}}
\def\XXint#1#2#3{{\setbox0=\hbox{$#1{#2#3}{\int}$ }
\vcenter{\hbox{$#2#3$ }}\kern-.6\wd0}}
\newcommand{\ABS}[2][]
{
  \ifthenelse{\equal{#1}{}}
  {\left|#2\right|}
  {\left|#2\right|_{#1}}
}
\newcommand{\NORM}[2][]
{
  \ifthenelse{\equal{#1}{}}
  {\left\|#2\right\|}
  {\left\|#2\right\|_{#1}}
}
\newcommand{\SCAL}[3][]
{
  \ifthenelse{\equal{#1}{}}
  {\langle{#2},{#3}\rangle}
  {\langle{#2},{#3}\rangle_{#1}}
}
\newcommand{\Diag}[1]{\operatorname{diag}\left(#1\right)}
\newcommand{\TimeInt}{I}
\newcommand{\Tfinal}{\infty}
\newcommand{\IdentSymb}{I}
\newcommand{\Identity}[1][]
{
  \ifthenelse{\equal{#1}{}}
  {\IdentSymb}
  {\IdentSymb_{#1}}
}
\newcommand{\Derpar}[2][]
{
  \ifthenelse{\equal{#1}{}}
  {\partial#2}
  {\partial_{#1}#2}
}
\newcommand{\Dt}[1]{\Derpar[t]{#1}}
\newcommand{\Lyap}{\mathcal{L}}
\newcommand{\Wmass}{\mathcal{M}}
\newcommand{\Ene}{\mathcal{E}}
\newcommand{\Dim}{d}
\newcommand{\Domain}{\Omega}
\newcommand{\Niter}{m}
\newcommand{\Niterpp}{\Niter+1}
\newcommand{\tstep}{k}
\newcommand{\tstepp}{\tstep+1}
\newcommand{\Deltat}[1][]{
  \ifthenelse{\equal{#1}{}}
  {\Delta t_{\tstep}}
  {\Delta t_{#1}}
  }
\newcommand{\MeshPar}{h}
\newcommand{\Cell}[1][]{\ifthenelse{\equal{#1}{}}{T}{T_{#1}}}
\newcommand{\Tsymb}{\mathcal{T}}
\newcommand{\Triang}[1][]
{
  \ifthenelse{\equal{#1}{}}
  {\Tsymb}
  {\Tsymb_{#1}}
}
\newcommand{\Vof}[2]{[#1]^{#2}}
\newcommand{\Lspacechar}{L}
\newcommand{\Lspace}[1]{\Lspacechar^{#1}}
\newcommand{\Cachar}{\mathcal{C}}
\newcommand{\Cont}[1][]{
  \ifthenelse{\equal{#1}{}}
  {\Cachar} 
  {\Cachar^{#1}}
}
\newcommand{\HolderExp}{\delta}
\newcommand{\Holder}[1][]{
  \ifthenelse{\equal{#1}{}}
  {\Cachar^{\HolderExp}}
  {\Cachar^{#1}}  
}
\newcommand{\Lip}[1]
{
  \ifthenelse{\equal{#1}{}}
  {\text{Lip}}
  {\text{Lip}_{#1}}        
}
\newcommand{\Sob}[2][]{
  \ifthenelse{\equal{#2}{}}
  {H^{#1}}
  {W^{#1,#2}}  
}
\newcommand{\DHolder}[2][]
{
  \ifthenelse{\equal{#2}{}}
  {\Cachar^{#1,\HolderExp}}   
  {\Cachar^{#1,#2}}  
}
\newcommand{\VspaceSymb}{\mathcal{V}}
\newcommand{\Vspace}[1][]{
  \ifthenelse{\equal{#1}{}}
  {\VspaceSymb_{\MeshPar}}
  {\VspaceSymb_{\MeshPar}(#1)}
}
\newcommand{\VbaseSymb}{\varphi}
\newcommand{\Vbase}[1][]{
  \ifthenelse{\equal{#1}{}}
  {\VbaseSymb}
  {\VbaseSymb_{#1}}
}
\newcommand{\WspaceSymb}{\mathcal{W}}
\newcommand{\Wspace}[1][]{
  \ifthenelse{\equal{#1}{}}
  {\WspaceSymb_{\MeshPar}}
  {\WspaceSymb_{\MeshPar}(#1)}
}
\newcommand{\WbaseSymb}{\psi}
\newcommand{\Wbase}[1][]{
  \ifthenelse{\equal{#1}{}}
  {\WbaseSymb}
  {\WbaseSymb_{#1}}
}
\newcommand{\Tdens}{\mu}
\newcommand{\OptTdens}{\Tdens^*}
\newcommand{\TdensIni}{\Tdens_0}
\newcommand{\Pot}{u}
\newcommand{\OptPot}{\Pot^*}
\newcommand{\Inode}{v}
\newcommand{\Iedge}{e}
\newcommand{\Opt}[1]{#1^{*}}
\newcommand{\Vel}{v}
\newcommand{\OptVel}{\Vel^*}
\newcommand{\Field}{\xi}
\newcommand{\Ftest}{\varphi}
\newcommand{\Cond}{\Tdens}
\newcommand{\Volt}{u}
\newcommand{\Dgrad}{\Matr{G}}
\newcommand{\Dflux}{v}
\newcommand{\Dfield}{\zeta}
\newcommand{\Forcing}{f}
\newcommand{\TolTime}{\tau_{\mbox{{\scriptsize T}}}}
\newcommand{\Graph}{G}
\newcommand{\Nodes}{V}
\newcommand{\Edges}{E}
\newcommand{\iedge}{e}
\newcommand{\BallSymb}{B}
\newcommand{\Ball}[2][]
{
  \ifthenelse{\equal{#2}{}}
  {\BallSymb_{#1}}
  {\BallSymb(#2,#1)}
}
\newcommand{\Avgint}[3][]
{
  \ifthenelse{\equal{#1}{}}
  {({#3})_{#2}}
  {(#3)_{#1,#2}}
}
\newcommand{\LF}{Lyapunov functional}
\newcommand{\OTP}{OTP}
\newcommand{\PP}[1][]
{
 \ifthenelse{\equal{#1}{full}}
  {Physarum Polychephalum}
  {PP}
}
\newcommand{\DMK}[1][]
{
 \ifthenelse{\equal{#1}{full}}
  {Dynamic Monge-Kantorovich}
  {DMK}
}
\newcommand{\PD}[1][]
{
 \ifthenelse{\equal{#1}{full}}
  {Physarum Dynamics}
  {PD}
}
\newcommand{\BP}[1][]
{
 \ifthenelse{\equal{#1}{full}}
  {Basis Pursuit}
  {BP}
}
\newcommand{\PDBP}[1][]
{
 \ifthenelse{\equal{#1}{full}}
  {Physarum Dynamics for Basis Pursuit}
  {PDBP}
}
\newcommand{\GFBP}[1][]
{
 \ifthenelse{\equal{#1}{full}}
  {Gradient Flow for Basis Pursuit}
  {GFBP}
}
\newcommand{\OTD}{OT\  density}
\newcommand{\Fnewton}{\Vect{F}}
\newcommand{\Jac}{J}
\newcommand{\Of}[1]{\left[#1\right]}
\newcommand{\PotOp}[1]{U\Of{#1}}
\newcommand{\VoltOp}[1]{\Vect{U}\Of{#1}}
\newcommand{\PotOf}[2][]
{
  \ifthenelse{\equal{#1}{}}
  {\Pot(#2)}
  {\Pot_{#1}(#2)}
}
\newcommand{\Nnn}{+}
\newcommand{\Diff}{A}
\newcommand{\Length}{w}
\newcommand{\Mlength}{\Matr{W}}
\newcommand{\Mcond}{\Matr{C}}
\newcommand{\Stiff}{S}
\newcommand{\Nnode}{n}
\newcommand{\Nedge}{m}
\newcommand{\GradVolt}{\Dgrad \!\Volt}
\newcommand{\Nnz}{k}
\newcommand{\DiffTest}{\Diff}
\newcommand{\DfluxTest}{\Dflux}
\newcommand{\ForcingTest}{\Forcing}
\newcommand{\EF}[1]{}
\newcommand{\MP}[1]{}
\newtheorem{Lemma}{Lemma} 
\newtheorem{Prop}{Proposition}
\newtheorem{Remark}{Remark}
\newcommand{\citet}{\cite}
\newcommand{\citep}{\cite}
\crefname{equation}{Equation}{Equations}
\crefname{theorem}{Theorem}{Theorems}
\crefname{chapter}{Chapter}{Chapters}
\crefname{figure}{Figure}{Figures}
\crefname{Conject}{Conjecture}{Conjectures}
\crefname{Problem}{Problem}{Problems}
\crefname{Prop}{Proposition}{Propositions}        
\crefname{Theo}{Theorem}{Theorems}
\crefname{Lemma}{Lemma}{Lemma}
\crefname{Corollary}{Corollary}{Corollaries}
\crefname{section}{Section}{Sections}
\begin{document}

\markboth{E. Facca et al.}{
  Extended Dynamic-Monge-Kantorovich equations
}

\title[
DMK for Basis pursuit
]{
  Physarum Dynamics and Optimal Transport for Basis Pursuit
}

\author{Enrico Facca, Franco Cardin and Mario Putti}
\address{
  Centro di Ricerca Matematica Ennio De Giorgi,
  Scuola Normale Superiore,
  Piazza dei Cavalieri, 3, 56126 Pisa, Italy\\
  (enrico.facca@sns.it)
}
\address{
  Department of Mathematics, University of Padua \\
  via Trieste 63, Padova, Italy\\
  \{cardin,putti\}@math.unipd.it
}

\maketitle

\begin{abstract}
  We study the connections between Physarum Dynamics and Dynamic Monge
  Kantorovich (DMK) Optimal Transport algorithms for the solution of
  Basis Pursuit problems. We show the equivalence between these two
  models and unveil their dynamic character by showing existence and
  uniqueness of the solution for all times and constructing a Lyapunov
  functional with negative Lie-derivative that drives the large-time
  convergence.
  We propose a discretization of the equation by means of a
  combination of implicit time-stepping and Newton method yielding an
  efficient and robust method for the solution of general basis
  pursuit problems.
  Several numerical experiments run on literature benchmark problems
  are used to show the accuracy, efficiency, and robustness of the
  proposed method.
\end{abstract}

\section{Introduction}

In recent years, a slime mold called \emph{Physarum Polychephalum}
(PP) captured the interest of many authors for its optimization
abilities first described
in~\cite{Nakagaki-et-al:2000,Tero-et-al:2010}.  The first attempt to
encode the optimization behavior of PP into a mathematical modeling
framework was proposed in~\citet{Tero-et-al:2007}. Here, the slime
mold body is schematized as a graph $\Graph=(\Nodes,\Edges)$ with
$\Nnode$ vertices $\Inode\in\Nodes$ and $\Nedge$ edges
$\Iedge\in\Edges$ of length $\Vect[\Iedge]{\Length}>0$. Given a time
interval $\TimeInt=[0,\Tfinal)$, denoting with $\Matr{\Diff}$ the
  adjacency matrix of $\Graph$, with
  $\Vect{\Length}=\{\Vect[\Iedge]{\Length}\}$ the vector of edge
  lengths, and with $\Vect{\Forcing}:\Nodes\mapsto\REAL$ a vector of
  forcing values such that
  $\sum_{\Inode\in\Nodes}\Vect[\Inode]{\Forcing}=0$, the \PD[full]
  (\PD) proposed by~\citet{Tero-et-al:2007} tries to find the curves
  \EF{$\TimeInt=[0,+\infty[$? Is the statement of the problem}
      $\Vect{\Cond}(t):\TimeInt\mapsto\REAL^{\Nedge}$ and
      $\Vect{\Volt}(t):\TimeInt\mapsto\REAL^{\Nnode}$ that solve the
      following equations:
\begin {subequations}
  \label{eq:pp-volt-cond} 
  \begin{align}
    \label{eq:pp-volt-cond-fick}
    &
    \Vect{\Dflux}(t)
    = \Mcond\Of{\Vect{\Cond}(t)}\Mlength^{-1}
      \Matr{\Diff}^T \Vect{\Volt}(t); \quad
      \Mcond\Of{\Vect\Cond(t)}=\Diag{\Vect\Cond(t)}; \quad  
      \Mlength=\Diag{\Vect\Length}\;, \quad   \\
    \label{eq:pp-volt-cond-cons}
    &
      \Matr{\Diff}\Vect{\Dflux}(t)=\Vect{\Forcing}\;, \\
    \label{eq:pp-volt-cond-dyn}
    &
      \Dt{\Vect{\Cond}}(t)=   
      \ABS{\Vect{\Dflux}(t)}
      -\Vect{\Cond}(t)\;,   \\
    \label{eq:pp-volt-cond-bc}
    &
      \Vect{\Cond}(0)=\hat{\Vect{\Cond}}>0\;,
  \end{align}
\end{subequations}
where the absolute value $|\cdot|$ is taken component-wise.
In~\citep{Bonifaci-et-al:2012} it was proved that the flux
$\Vect{\Dflux}(t)$ converges as $t\Tendsto\infty$ toward a steady
state equilibrium $\Opt{\Vect{\Dflux}}$ that solves the following
weighted $l_1$ minimization problem on the graph $\Graph$, also called
\emph{Optimal Transshipment Problem}:
\begin{gather}
  \label{eq:basis-pursuit}
  \min_{\Vect{\Dflux}\in\REAL^{\Nedge}}
  \NORM[1,\Mlength]{\Vect{\Dflux}}
  :=\sum_{\Iedge\in\Edges}\ABS{\Vect[\Iedge]{\Dflux}}\Length_{\Iedge}
  \quad \mbox{s.t.:}\ \Matr{\Diff}\ \Vect{\Dflux} = \Vect{\Forcing}
  \;,
\end{gather}
More recently, \citet{Straszak-Vishnoi:2016} extended the application
of the model given by~\cref{eq:pp-volt-cond} to general matrices
$\Matr{\Diff}\in \REAL^{\Nnode,\Nedge}$, arriving at a
$l_1$-minimization problem with weight $\Vect{\Length}$ (often equal
to one) related to \emph{\BP[full]} (\BP).  Moreover, the same authors
unraveled the relationship of their approach with the Iteratively
Reweighted Least Square method (IRLS), an Alternating Minimization
algorithm~\citet{Beck:2015} for the solution of $l_1$-minimizations.

The above \PD\ model inspired also the study
in~\citet{Facca-et-al:2018}, where an extension to the continuous
framework, called the \emph{\DMK[full]} problem (\DMK), was presented
and studied. In this latter study, the authors propose a conjecture
stating that the long time solution of \DMK\ is solution to the Optimal
Transport Problem (\OTP)~\cite{Villani:2008}, and in particular the
Monge-Kantorovich equations, with cost equal to the Euclidean distance,
(the so-called $\Lspace{1}$-\OTP).

The first goal of our work is to show the analogies between the
$\Lspace{1}$-\OTP\ and the \BP\ problems,
\EF{Is this really new and interesting?}
and between the continuum \DMK\ model and the discrete \PD\ model
proposed by~\citep{Straszak-Vishnoi:2016}, extending and
re-interpreting some of their results to highlight the dynamic nature
of the model.  We provide a novel and extended proof of uniqueness
and global existence of the solution to the system of equations,
\EF{Bonifaci's proof is limited to the graph case}
and show the existence of a \LF\ $\Lyap$, i.e. a functional decreasing
along solution trajectories.  In addition, we re-adapt to the discrete
case the proof given in~\cite{Facca-et-al-numeric:2018} showing that
the minimization of $\Lyap$ is equivalent to the $l_1$ minimization
problem~\cref{eq:basis-pursuit}.
\MP{Ho messo la referenza a 1.2, per capire bene cos'e' l'$l_1$ min pbl}

Our second goal is to fully exploit the dynamic nature of \PD from a
computational point of view and use an implicit (Backward Euler) time
stepping scheme combined with Newton-Raphson method to drastically
reduce the computational cost of the solution of \BP\ via \PD.

The paper is organized as follows.
In ~\cref{sec:PDtoDMK} we describe the \DMK\ model in the continuous
setting, recalling the main results and conjectures.
\Cref{sec:DMKtoBP} starts from the semi-discrete (discretized in
space) \DMK\ problem to derive the \BP\ dynamics and show global
existence and uniqueness of the solution of the resulting ODE
system. Moreover, we prove that there exist a \LF\ and that its unique
minimum (at infinite times) is the solution of an \OTP. In this
section we try to use notations that are similar to the continuous
case to highlight the similarities between the two problems, and use
the duality relationships between the gradient and divergence
operators.
The next section develops our final algorithm, introducing the
implicit time discretization (backward Euler) and Newton method.
Finally, a set of numerical results are used to show the efficiency
and robustness of our algorithm.

\section{From \PD[full] to \DMK[full]}
\label{sec:PDtoDMK}

We start this section by recalling the main results proposed
in~\cite{Facca-et-al:2018,Facca-et-al-numeric:2018} in the continuous
setting.  The \DMK\ model can be described as follows.  Consider a
convex, bounded and smooth domain $\Domain\in\REAL^{\Dim}$, a function
$\Forcing\in\Lspace{2}(\Domain)$ with $\int_{\Domain}\Forcing = 0$,
and a strictly positive function $\TdensIni$ on $\Domain$. We want to
find the pair of functions $\Pot:\TimeInt\times\Domain\mapsto\REAL$ and
$\Tdens:\TimeInt\times\Domain\mapsto\REAL$ solving: 
\begin{subequations}
  \label{eq:dmk}
  \begin{align}
    \label{eq:dmk-cons}
    &  -\Div\left(\Tdens(t,x)\Grad\Pot(t,x)\right)=\Forcing(x)\;,
    \\ 
    \label{eq:dmk-dyn}
    & \Dt{\Tdens}(t,x)=\ABS{\Tdens(t,x)\Grad\Pot(t,x)} -
    \Tdens(t,x)\;,
    \\ 
    \label{eq:dmk-bc}
    & \Tdens(0,x) = \TdensIni(x)\;,
  \end{align}
\end{subequations}
where the first equation is intended in a weak sense and is completed
with zero Neumann Boundary conditions.  In~\cite{Facca-et-al:2018} it
was conjectured that the solution of the above problem convergences as
$t\Tendsto\infty$ to a steady state configuration
$(\OptTdens,\OptPot)$ solution of the so-called Monge-Kantorovich
Equation, a PDE based formulation of the
$\Lspace{1}$-\OTP~\citep{Evans-Gangbo:1999}.

Denoting with $\PotOp{\bar{\Tdens}}$ the weak solution of
$-\Div(\bar{\Tdens}\Grad\bar{\Pot})=\Forcing$ for a given
$\bar{\Tdens}>0$, system~\cref{eq:dmk} is converted into the following
ordinary differential equation in Banach space:
\begin{subequations}
  \label{eq:dmk-ode}
  \begin{align}
    \label{eq:dmk-ode-dyn}
    & \Dt{\Tdens}(t,x) =
    \Tdens(t,x)\ABS{\Grad\PotOp{\Tdens(t,x)})} - \Tdens(t,x)\;,
    \\
    \label{eq:dmk-ode-bc}
    &\Tdens(0) = \TdensIni\;.
  \end{align}
\end{subequations}
The existence of a global solution of~\cref{eq:dmk-ode} is still an
open question and so is convergence toward equilibrium. However,
in~\cite{Facca-et-al:2018,Facca-et-al-numeric:2018} the following
results are proved:
\begin{Prop}
  \label{prop:dmk-lie}
  There exists a $\bar{t}>0$ such that~\cref{eq:dmk-ode} has a unique
  solution $\Tdens(t)$ for all $t \in [0,\bar{t}[$ that bounded from
  below by $e^{-t}\min(\TdensIni)$.  Moreover the functional
  \begin{gather}
    \label{eq:dmk-lyap}
    \Lyap(\Tdens)
    :=\Ene(\Tdens)+\Wmass(\Tdens)\;,
    \\
    \Ene(\Tdens)
    :=\sup_{\Ftest\in\Lip{}(\Domain)}
    \int_{\Domain}
    \left(
      \Forcing \Ftest-\Tdens\frac{\ABS{\Grad\Ftest}^2}{2}
    \right)
    \dx
    \nonumber
    \quad
    \Wmass(\Tdens)
    :=\frac{1}{2}\int_{\Domain}\Tdens\dx\;,
  \end{gather}
  is a \LF\ for \cref{eq:dmk-ode}, with derivative along
  the $\Tdens(t)$-trajectory given by:
  \begin{equation}
    \label{eq:dmk-lie-der}
    \frac{d}{dt}\left(\Lyap(\Tdens(t)) \right)
    =-\frac{1}{2}
    \int_{\Omega}{\Tdens(t)
      \left(\ABS{\Grad\PotOp{\Tdens(t)}}+1)\right)
      \left(\ABS{\Grad\PotOp{\Tdens(t)}}-1\right)^2
    }\dx\;.
  \end{equation}
  In addition, $\Lyap(\Tdens)$ admits a unique minimizer $\OptTdens$,
  the so-called \OTD, and the following equalities hold:
  \begin{equation}
    \label{eq:dmk-min-lyap}
    \min_{\Tdens\in\Lspace{1}(\Domain)} 
    \Lyap(\Tdens)=
    \min_{\Vel\in\Vof{\Lspace{1}(\Domain)}{\Dim}}
    \left\{
      \int_{\Omega}{\ABS{\Vel}}\dx\ :\ -\Div(\Vel)=\Forcing
    \right\}
    =\max_{\Pot\in\Lip{1}(\Domain)}
    \left\{
      \int_{\Omega}{\Pot\Forcing}\dx
    \right\}\;,
  \end{equation}
  where the unique minimizers $\OptTdens$ and $\OptVel$, and the
  non-unique maximizer $\OptPot$ are related by the following
  equation:
  \begin{equation}
    \label{eq:dmk-minima}
    \OptVel=\OptTdens\Grad\OptPot\;.
  \end{equation}
\end{Prop}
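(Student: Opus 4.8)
The plan is to proceed in four stages, treating the local well-posedness, the Lyapunov inequality, the variational identities, and the uniqueness of the minimizer in turn. First, for local existence and uniqueness of~\eqref{eq:dmk-ode}, I would view the right-hand side as a map $\Tdens \mapsto \Tdens(\ABS{\Grad\PotOp{\Tdens}}-1)$ on a suitable open subset of a Banach space of strictly positive densities (e.g.\ a H\"older or $\Lspace{\infty}$ space bounded away from zero), and verify that $\Tdens \mapsto \PotOp{\Tdens}$ is locally Lipschitz by elliptic regularity applied to $-\Div(\Tdens\Grad\Pot)=\Forcing$ (the solution operator depends smoothly on the coefficient as long as $\Tdens$ stays uniformly positive). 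Local Lipschitz continuity of the composition then follows, so the Picard--Lindel\"of theorem in Banach space yields a unique solution on some $[0,\bar t[$. The lower bound $\Tdens(t)\ge e^{-t}\min(\TdensIni)$ comes from~\eqref{eq:dmk-ode-dyn}: since $\ABS{\Grad\PotOp{\Tdens}}\ge 0$, we have $\Dt{\Tdens}\ge -\Tdens$ pointwise, and Gr\"onwall gives the estimate.

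Second, for the Lyapunov property, the key observation is that $\Ene(\Tdens)$ is an infimal/supremal convolution whose supremum over $\Ftest$ is attained at $\Ftest = \PotOp{\Tdens}$ — indeed the Euler--Lagrange equation of $\sup_\Ftest \int(\Forcing\Ftest - \Tdens\ABS{\Grad\Ftest}^2/2)$ is exactly $-\Div(\Tdens\Grad\Ftest)=\Forcing$. This has two consequences. On one hand it lets me evaluate $\Ene(\Tdens) = \frac12\int_\Domain \Tdens\ABS{\Grad\PotOp{\Tdens}}^2\dx$. On the other hand, by the envelope theorem the derivative $\frac{d}{dt}\Ene(\Tdens(t))$ picks up only the explicit $\Tdens$-dependence, namely $-\frac12\int_\Domain \Dt{\Tdens}\,\ABS{\Grad\PotOp{\Tdens}}^2\dx$ (the term from varying $\Ftest$ vanishes at the optimum). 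Adding $\frac{d}{dt}\Wmass(\Tdens(t)) = \frac12\int_\Domain \Dt{\Tdens}\dx$ and substituting $\Dt{\Tdens} = \Tdens(\ABS{\Grad\PotOp{\Tdens}}-1)$ gives
\begin{equation*}
  \frac{d}{dt}\Lyap(\Tdens(t)) = \frac12\int_\Domain \Tdens(\ABS{\Grad\PotOp{\Tdens}}-1)\bigl(1-\ABS{\Grad\PotOp{\Tdens}}^2\bigr)\dx,
\end{equation*}
and factoring $1-\ABS{\Grad\PotOp{\Tdens}}^2 = -(\ABS{\Grad\PotOp{\Tdens}}-1)(\ABS{\Grad\PotOp{\Tdens}}+1)$ reproduces~\eqref{eq:dmk-lie-der}; non-positivity is then manifest since $\Tdens>0$.

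Third, for the chain of equalities~\eqref{eq:dmk-min-lyap}, I would argue as follows. Swapping inf and sup in $\min_\Tdens \Lyap(\Tdens) = \min_\Tdens \sup_\Ftest \int(\Forcing\Ftest - \Tdens\ABS{\Grad\Ftest}^2/2 + \Tdens/2)$ — justified by a minimax/Sion argument, using convexity in $\Tdens$ and concavity in $\Ftest$ — and then performing the inner minimization over $\Tdens\ge0$ pointwise: the integrand is linear in $\Tdens$ with coefficient $(1-\ABS{\Grad\Ftest}^2)/2$, so the minimum is $0$ if $\ABS{\Grad\Ftest}\le 1$ everywhere and $-\infty$ otherwise. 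Hence $\min_\Tdens\Lyap(\Tdens) = \sup\{\int_\Domain \Forcing\Ftest\dx : \ABS{\Grad\Ftest}\le 1\} = \max_{\Pot\in\Lip{1}(\Domain)}\int_\Domain\Pot\Forcing\dx$, which is the right-hand equality; the middle one is the classical Kantorovich--Rubinstein duality for the $\Lspace1$ transport / Beckmann problem $\min\{\int\ABS{\Vel} : -\Div\Vel=\Forcing\}$, which I would cite from~\cite{Evans-Gangbo:1999}. The relation~\eqref{eq:dmk-minima} $\OptVel=\OptTdens\Grad\OptPot$ then comes from matching the optimality conditions of the two sides.

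Finally, uniqueness of the minimizer $\OptTdens$ is the most delicate point, since $\Lyap$ is convex but not strictly convex in $\Tdens$ (it is linear in $\Tdens$ on $\Wmass$ and the $\Ene$ part is a sup of linear functionals, hence again not strictly convex). I would instead characterize $\OptTdens$ through~\eqref{eq:dmk-minima}: the optimal Beckmann flux $\OptVel$ is unique (this is the standard uniqueness result for $\Lspace1$-OT flux/transport density, again from~\cite{Evans-Gangbo:1999}), and on the set $\{\ABS{\Grad\OptPot}=1\}$ — which carries $\OptVel$ — the identity $\OptVel=\OptTdens\Grad\OptPot$ forces $\OptTdens = \ABS{\OptVel}$, pinning down $\OptTdens$ uniquely. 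The main obstacle I anticipate is precisely this last step: making rigorous the complementarity ``$\OptTdens = 0$ or $\ABS{\Grad\OptPot}=1$'' and showing it holds $\OptTdens$-a.e., which requires care with the sup defining $\Ene$ (the optimal test function need not be Lipschitz a priori when $\Tdens$ degenerates) and with the measure-theoretic meaning of the transport density; this is exactly the kind of subtlety that the cited works handle and that I would import rather than redo.
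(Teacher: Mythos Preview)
The paper does not actually prove Proposition~\ref{prop:dmk-lie}: it is stated as a recollection of results established in~\cite{Facca-et-al:2018,Facca-et-al-numeric:2018}, with no proof given in this paper. What the paper \emph{does} prove is the discrete analogue, Proposition~\ref{prop:discrete}, and your plan tracks that argument closely in overall structure (local well-posedness via a Lipschitz right-hand side and Picard--Lindel\"of, the lower bound from the mild formulation/Gr\"onwall, computation of the Lyapunov derivative, then the variational identities).

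There are two places where your route diverges from the discrete proof actually given here. First, for the Lyapunov derivative you invoke the envelope theorem, whereas the paper differentiates the constraint $\Matr{\Stiff}\Of{\Vect{\Cond}(t)}\Vect{\Volt}(t)=\Vect{\Forcing}$ directly in time and substitutes the resulting identity back into $\frac{d}{dt}\Ene$; the two computations are equivalent, and your envelope argument is arguably cleaner. Second, for the chain~\eqref{eq:dmk-min-lyap} you propose a Sion/minimax swap followed by pointwise minimization over $\Tdens\ge 0$, while the paper (in the discrete case) goes the other way: it rewrites $\Ene$ via the dual formulation of Lemma~\ref{lemma:duality-ene}, applies Young's inequality $\ABS{\Field_{\Iedge}\Cond_{\Iedge}}\le \tfrac12(\Field_{\Iedge}^2\Cond_{\Iedge}+\Cond_{\Iedge})$ to bound the weighted $l^1$ flux norm by $\Lyap$, and then closes the chain by plugging in the explicit candidate $\Opt{\Vect[\Iedge]{\Cond}}:=\ABS{\Opt{\Vect[\Iedge]{\Dflux}}}$. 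Your minimax route is more direct but requires justifying the inf--sup swap over noncompact sets with a functional that can take the value $-\infty$; the paper's Young-inequality route sidesteps this and is more elementary, at the cost of invoking the duality lemma as a black box.

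Your treatment of uniqueness of $\OptTdens$ is in fact more careful than what the paper's discrete proof does (it asserts uniqueness without further argument); your instinct to import it from the transport-density uniqueness theory of~\cite{Evans-Gangbo:1999} is exactly the mechanism the cited continuous works rely on.
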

\begin{Remark}
  For a strictly positive measure $\Tdens$ the energy functional
  $\Ene(\Tdens)$ can be written equivalently as:
  \begin{equation}
    \label{eq:dmk-ene-explicit}
    \Ene(\Tdens)=
    \frac{1}{2}\int_{\Domain}\Tdens\ABS{\Grad\PotOp{\Tdens}}^2\dx.
  \end{equation}
\end{Remark}

\section{From \DMK[full] to \BP[full]}
\label{sec:DMKtoBP}

The semi-discrete version of~\cref{eq:dmk} obtained after spatial
discretization by e.g. the finite element method as done
in~\citet{Facca-et-al-numeric:2018} is related
to~\cref{eq:pp-volt-cond}. Indeed, assuming for simplicity that
matrix $\Matr{\Diff}$ in~\cref{eq:basis-pursuit}  
is of full-rank, using the matrices $\Mlength$ and
$\Mcond\Of{\Vect{\Cond}}$ defined in~\cref{eq:pp-volt-cond-fick}
and the additional matrices:
\begin{equation}
  \label{eq:matrices-1}
  \Dgrad:=\Mlength^{-1}\Matr{\Diff}^{T}
  \qquad
  \Matr{\Stiff}\Of{\Vect{\Cond}}:=\Matr{\Diff}
    \Mcond\Of{\Vect{\Cond}}
    \Mlength^{-1}\Matr{\Diff}^{T} \; ,
\end{equation}
we can rewrite~\cref{eq:pp-volt-cond} as follows:
\begin {subequations}
  \label{eq:discrete-volt-cond} 
  \begin{align}
    \label{eq:discrete-volt-cond-cons}
    \Matr{\Stiff}\Of{\Vect{\Cond}(t)}
    &\Vect{\Volt}(t)
     =\Vect{\Forcing}\;,
    \\
    \label{eq:discrete-volt-cond-dyn}
    \Dt{\Vect{\Cond}}(t)
    &= \ABS{\Vect{\Dflux}(t)}-\Vect{\Cond}(t) \\
    &= \ABS{\Mcond\Of{\Vect{\Cond}(t)}\Mlength^{-1}
      \Matr{\Diff}^T\Vect{\Volt}(t)}-\Vect{\Cond}(t) 
      = \ABS{\Mcond\Of{\Vect{\Cond}(t)}\Dgrad\Vect{\Volt}(t)}
      -\Vect{\Cond}(t) \nonumber
    \\
    \label{eq:discrete-volt-cond-bc}
    \Vect{\Cond}(0)
    &=\hat{\Vect{\Cond}}>0 \; ,
  \end{align}
\end{subequations}
where as before the absolute value of a vector $\ABS{\cdot}$ is
intended element-by-element. We can identify in
$\Matr{\Stiff}\Of{\Vect{\Cond}(t)}$ the finite element stiffness
matrix for a given $\Vect{\Cond}$ and in
$\Dgrad\Vect{\Volt}(t)=\Mlength^{-1}\Matr{\Diff}^{T}\Vect{\Volt}(t)$
the discretization of the gradient of $\PotOp{\Tdens(t)}$. This
semi-discrete algorithm is the same as the one proposed
in~\citet{Straszak-Vishnoi:2016} for the solution of the \BP[full]
problem.
\MP{E' veramente lo stesso?}
Analogously to the continuous case, given a strictly positive
$\Vect{\Cond}$, we define $\VoltOp{\Vect{\Cond}}$ as the solution of
the linear system
$\Matr{\Stiff}\Of{\Vect{\Cond}}\Vect{\Volt}=\Vect{\Forcing}$ and
rewrite~\cref{eq:discrete-volt-cond} in the following system of ODEs:
\begin {subequations}
  \label{eq:ode-cond} 
  \begin{align}
    \label{eq:ode-cond-dyn}
    &\Dt{\Vect{\Cond}}(t)=
      \Mcond\Of{\Vect{\Cond}(t)}\ABS{\Dgrad\VoltOp{\Vect{\Cond}(t)}}
      -\Vect{\Cond}(t) \;,\\
      \label{eq:ode-cond-bc-1}
    &\Vect{\Cond}(0)=\hat{\Vect{\Cond}}>0 \;.
  \end{align}
\end{subequations}
Now we are able to prove that the solution of the above problem exists
and is unique for all times and that this dynamics admits a unique
long-time equilibrium solution. Moreover we can define a \LF\ whose
minimizer is solution of a discrete optimal transport problem.
Indeed, we have the following results:
\begin{Prop}
  \label{prop:discrete}
  \Cref{eq:ode-cond} has a global unique solution for all
  $t\in\TimeInt$ and admits a
  \LF\ given by:
  \begin{gather}
    \label{eq:discrete-lyap}
    \Lyap(\Vect{\Cond}):=\Ene(\Vect{\Cond})+ \Wmass(\Vect{\Cond})\; ,
    \\
    \Ene(\Vect{\Cond})=\sup_{\Vect{\Volt}\in\REAL^{\Nnode}}
    \left\{
      \Vect{\Forcing}^T\Vect{\Volt}
      -
      \frac{1}{2}\Vect{\Volt}^T\Matr{\Stiff}\Of{\Vect{\Cond}}\Vect{\Volt}
    \right\}
    \quad
    \Wmass(\Vect{\Cond}):=\frac{1}{2}\Vect{\Cond}^T\Vect{\Length}\;.
  \end{gather}
  The derivative of $\Lyap$ along the $\Vect{\Cond}(t)$-trajectory
  takes on the form:
  \begin{equation}
    \label{eq:discrete-lie-der}
    \frac{d}{dt}\Lyap(\Vect{\Cond}(t))=
    -\frac{1}{2} \sum_{\Iedge=1}^{\Nedge}\Vect[\Iedge]{\Cond}(t)
    \left(\ABS[\Iedge]{\Vect{\GradVolt}(t)}+1\right)
    \left(\ABS[\Iedge]{\Vect{\GradVolt}(t)}-1\right)^2
    \Vect[\Iedge]{\Length} \; .
  \end{equation}
  and the following equalities hold:
  \begin{equation}
    \label{eq:discrete-min-lyap}
    \min_{\Vect{\Tdens}\in\REAL^{\Nedge}\geq 0} 
    \Lyap(\Vect{\Tdens})=
    \min_{\Vect{\Dflux}\in\REAL^{\Nedge}}
    \left\{
      \NORM[1,\Mlength]{\Vect[\Iedge]{\Dflux}}
      \ :\ 
      \Matr{\Diff}\Vect{\Dflux}=\Vect{\Forcing}
    \right\}
    =
    \max_{\Vect{\Volt}\in\REAL^{\Nnode}}
    \left\{
      \Vect{\Pot}^T\Vect{\Forcing}
      \ : \ 
      \NORM{\Dgrad\Vect{\Volt}}_{l_{\infty}}\leq 1 \; ,
    \right\}
  \end{equation}
  where the unique minimizers $\Opt{\Vect{\Cond}}$ and
  $\Opt{\Vect{\Dflux}}$, and the non-unique maximizer
  $\Opt{\Vect{\Volt}}$ are related by the following equation:
  \begin{equation}
    \label{eq:discrete-minima}
    \Opt{\Vect{\Dflux}} 
    =\Diag{\Opt{\Vect{\Cond}}}\Dgrad\Opt{\Vect{\Volt}}
    =\Mcond\Of{\Opt{\Vect{\Cond}}}\Dgrad\Opt{\Vect{\Volt}}\; .
  \end{equation}
\end{Prop}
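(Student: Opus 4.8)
I would establish \cref{prop:discrete} in four steps: global well-posedness, the dissipation identity \cref{eq:discrete-lie-der}, the variational characterization \cref{eq:discrete-min-lyap}, and the relation \cref{eq:discrete-minima} between the optimizers.

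\textbf{Global well-posedness.} On the open cone $\{\Vect{\Cond}>0\}\subset\REAL^{\Nedge}$ the matrix $\Matr{\Stiff}\Of{\Vect{\Cond}}=\Matr{\Diff}\Mcond\Of{\Vect{\Cond}}\Mlength^{-1}\Matr{\Diff}^{T}$ is symmetric positive definite (here the full-rank hypothesis on $\Matr{\Diff}$ enters), so $\VoltOp{\Vect{\Cond}}=\Matr{\Stiff}\Of{\Vect{\Cond}}^{-1}\Vect{\Forcing}$ is well defined and analytic in $\Vect{\Cond}$; hence the right-hand side of \cref{eq:ode-cond-dyn} is locally Lipschitz and Picard--Lindel\"of gives a unique maximal solution on some $[0,T_{\max})$. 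Componentwise, $\Dt{\Vect[\Iedge]{\Cond}}=\Vect[\Iedge]{\Cond}\bigl(\ABS[\Iedge]{\Dgrad\VoltOp{\Vect{\Cond}}}-1\bigr)\ge-\Vect[\Iedge]{\Cond}$, so $\Vect[\Iedge]{\Cond}(t)\ge e^{-t}\hat{\Vect[\Iedge]{\Cond}}>0$ and the trajectory never leaves the cone. For a complementary upper bound I would invoke the Lyapunov identity from the next step: $\Ene(\Vect{\Cond})=\tfrac12\Vect{\Forcing}^{T}\VoltOp{\Vect{\Cond}}\ge0$ and $\Wmass(\Vect{\Cond})=\tfrac12\Vect{\Cond}^{T}\Vect{\Length}\ge0$, hence $\Lyap\ge0$ and $\tfrac12\Vect{\Cond}(t)^{T}\Vect{\Length}=\Wmass(\Vect{\Cond}(t))\le\Lyap(\Vect{\Cond}(t))\le\Lyap(\hat{\Vect{\Cond}})$, which bounds every $\Vect[\Iedge]{\Cond}(t)$ uniformly (as $\Vect[\Iedge]{\Cond}\ge0$, $\Vect[\Iedge]{\Length}>0$). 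On any finite interval the solution then stays in a compact subset of $\{\Vect{\Cond}>0\}$, so by continuation $T_{\max}=\infty$.

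\textbf{The dissipation identity \cref{eq:discrete-lie-der}.} For $\Vect{\Cond}>0$ the map $\Vect{\Volt}\mapsto\Vect{\Forcing}^{T}\Vect{\Volt}-\tfrac12\Vect{\Volt}^{T}\Matr{\Stiff}\Of{\Vect{\Cond}}\Vect{\Volt}$ is strictly concave with unique maximizer $\VoltOp{\Vect{\Cond}}$, so $\Ene$ is smooth and, by the envelope theorem (or directly, differentiating $\Ene(\Vect{\Cond})=\tfrac12\Vect{\Forcing}^{T}\Matr{\Stiff}\Of{\Vect{\Cond}}^{-1}\Vect{\Forcing}$), $\partial\Ene/\partial\Vect[\Iedge]{\Cond}=-\tfrac12\VoltOp{\Vect{\Cond}}^{T}\bigl(\partial\Matr{\Stiff}\Of{\Vect{\Cond}}/\partial\Vect[\Iedge]{\Cond}\bigr)\VoltOp{\Vect{\Cond}}$. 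Using the (edge-wise rank-one) derivative of the linear map $\Vect{\Cond}\mapsto\Matr{\Stiff}\Of{\Vect{\Cond}}$ together with $\Dgrad\VoltOp{\Vect{\Cond}}=\Mlength^{-1}\Matr{\Diff}^{T}\VoltOp{\Vect{\Cond}}$, this collapses to $\partial\Ene/\partial\Vect[\Iedge]{\Cond}=-\tfrac12\Vect[\Iedge]{\Length}\ABS[\Iedge]{\Dgrad\VoltOp{\Vect{\Cond}}}^{2}$, while $\partial\Wmass/\partial\Vect[\Iedge]{\Cond}=\tfrac12\Vect[\Iedge]{\Length}$. Plugging \cref{eq:ode-cond-dyn} in and writing $s_\Iedge:=\ABS[\Iedge]{\Dgrad\VoltOp{\Vect{\Cond}(t)}}$,
\begin{equation*}
  \frac{d}{dt}\Lyap(\Vect{\Cond}(t))=\frac{1}{2}\sum_{\Iedge=1}^{\Nedge}\Vect[\Iedge]{\Length}\,\Vect[\Iedge]{\Cond}(t)\,(1-s_\Iedge^{2})(s_\Iedge-1)=-\frac{1}{2}\sum_{\Iedge=1}^{\Nedge}\Vect[\Iedge]{\Length}\,\Vect[\Iedge]{\Cond}(t)\,(s_\Iedge-1)^{2}(1+s_\Iedge)\le 0,
\end{equation*}
which is \cref{eq:discrete-lie-der}; in particular $\Lyap$ is non-increasing along solutions, i.e. a Lyapunov functional for \cref{eq:ode-cond}.

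\textbf{The variational identities \cref{eq:discrete-min-lyap}--\cref{eq:discrete-minima}.} The hinge is the Legendre duality of the discrete weighted Dirichlet energy: for $\Vect{\Cond}>0$, $\Ene(\Vect{\Cond})=\min\{\tfrac12\sum_\Iedge\Vect[\Iedge]{\Length}\Vect[\Iedge]{\Dflux}^{2}/\Vect[\Iedge]{\Cond}:\Matr{\Diff}\Vect{\Dflux}=\Vect{\Forcing}\}$, with minimizer $\Vect{\Dflux}=\Mcond\Of{\Vect{\Cond}}\Dgrad\VoltOp{\Vect{\Cond}}$ (eliminate $\Vect{\Dflux}$ with a multiplier $\Vect{\Volt}$ and recognize $\Matr{\Stiff}\Of{\Vect{\Cond}}\Vect{\Volt}=\Vect{\Forcing}$). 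Hence $\Lyap(\Vect{\Cond})=\min_{\Matr{\Diff}\Vect{\Dflux}=\Vect{\Forcing}}\tfrac12\sum_\Iedge\Vect[\Iedge]{\Length}\bigl(\Vect[\Iedge]{\Dflux}^{2}/\Vect[\Iedge]{\Cond}+\Vect[\Iedge]{\Cond}\bigr)$; minimizing also over $\Vect{\Cond}>0$ and exchanging the two infima (always legitimate), the inner minimization decouples over edges and, by $a^{2}/c+c\ge2\ABS{a}$ with equality at $c=\ABS{a}$, equals $\sum_\Iedge\Vect[\Iedge]{\Length}\ABS{\Vect[\Iedge]{\Dflux}}=\NORM[1,\Mlength]{\Vect{\Dflux}}$; this gives the first equality on $\{\Vect{\Cond}>0\}$, and passing to the closed cone $\{\Vect{\Cond}\ge0\}$ (lower semicontinuity of $\Lyap$, together with the natural extension of the duality above with $a^{2}/0:=+\infty$) leaves the value unchanged, the minimum being attained at $\Opt{\Vect[\Iedge]{\Cond}}=\ABS{\Opt{\Vect[\Iedge]{\Dflux}}}$ for any weighted-$\ell_{1}$ minimizer $\Opt{\Vect{\Dflux}}$. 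The second equality is standard linear-programming duality for $\min\{\NORM[1,\Mlength]{\Vect{\Dflux}}:\Matr{\Diff}\Vect{\Dflux}=\Vect{\Forcing}\}$ (feasible and bounded below, so strong duality holds): splitting $\Vect{\Dflux}$ into positive and negative parts, the dual reads $\max\{\Vect{\Volt}^{T}\Vect{\Forcing}:\ABS{(\Matr{\Diff}^{T}\Vect{\Volt})_\Iedge}\le\Vect[\Iedge]{\Length}\ \forall\Iedge\}$, and the constraint is exactly $\NORM{\Dgrad\Vect{\Volt}}_{l_{\infty}}\le1$. For \cref{eq:discrete-minima}, complementary slackness between $\Opt{\Vect{\Dflux}}$ and a dual optimum $\Opt{\Vect{\Volt}}$ gives $\sum_\Iedge\Vect[\Iedge]{\Length}(\Dgrad\Opt{\Vect{\Volt}})_\Iedge\Opt{\Vect[\Iedge]{\Dflux}}=\Opt{\Vect{\Volt}}^{T}\Vect{\Forcing}=\NORM[1,\Mlength]{\Opt{\Vect{\Dflux}}}$; since $\ABS{(\Dgrad\Opt{\Vect{\Volt}})_\Iedge}\le1$, every term saturates, so $(\Dgrad\Opt{\Vect{\Volt}})_\Iedge=\Sign(\Opt{\Vect[\Iedge]{\Dflux}})$ whenever $\Opt{\Vect[\Iedge]{\Dflux}}\neq0$, and together with $\Opt{\Vect[\Iedge]{\Cond}}=\ABS{\Opt{\Vect[\Iedge]{\Dflux}}}$ (and $\Opt{\Vect[\Iedge]{\Cond}}=0\Rightarrow\Opt{\Vect[\Iedge]{\Dflux}}=0$) this is $\Opt{\Vect{\Dflux}}=\Mcond\Of{\Opt{\Vect{\Cond}}}\Dgrad\Opt{\Vect{\Volt}}$. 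That the long-time limit of \cref{eq:ode-cond} is this minimizer would follow from a LaSalle-type argument: by \cref{eq:discrete-lie-der} the set $\{\tfrac{d}{dt}\Lyap=0\}$ consists of the states having $\Vect[\Iedge]{\Cond}=0$ or $\ABS[\Iedge]{\Dgrad\VoltOp{\Vect{\Cond}}}=1$ on each edge, which are precisely the critical points of the convex $\Lyap$ on $\{\Vect{\Cond}\ge0\}$, hence its minimizers, and they are also equilibria of the flow.

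\textbf{Expected main obstacle.} I expect the real difficulty to be the claimed \emph{uniqueness} of $\Opt{\Vect{\Cond}}$ and $\Opt{\Vect{\Dflux}}$: the weighted $\ell_{1}$ problem is a linear program, so convexity of $\Lyap$ alone does not yield it, and — as in the continuous Monge--Kantorovich case — uniqueness must be extracted from additional structure (strict convexity of $\Lyap$ on its effective domain, or a nondegeneracy/genericity hypothesis on $\Vect{\Forcing}$). The supporting technicalities — the extension of the Dirichlet-energy duality and of the infimum exchange to the closed cone $\{\Vect{\Cond}\ge0\}$, where $\Ene$ may equal $+\infty$ — need care but are routine once the strictly positive case is in hand.
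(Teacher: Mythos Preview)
Your proposal is correct and follows essentially the same route as the paper: local Lipschitz continuity of $\VoltOp{\cdot}$ for Picard--Lindel\"of, the lower bound $\Vect{\Cond}(t)\ge e^{-t}\hat{\Vect{\Cond}}$, the dissipation identity (you use the envelope theorem, the paper differentiates $\Matr{\Stiff}\Of{\Vect{\Cond}(t)}\Vect{\Volt}(t)=\Vect{\Forcing}$ in time and multiplies by $\Vect{\Volt}^{T}$---the same computation), the $l_{1}$ bound on $\Vect{\Cond}(t)$ from $\Wmass\le\Lyap$ for global existence, the Dirichlet-energy duality (the paper's \cref{lemma:duality-ene}, quoted from Ekeland--Temam), and the inequality $a^{2}/c+c\ge 2\lvert a\rvert$ (the paper calls it Young's inequality) together with the choice $\Opt{\Vect[\Iedge]{\Cond}}=\lvert\Opt{\Vect[\Iedge]{\Dflux}}\rvert$ to close the chain of inequalities.

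Where you go beyond the paper is in two places. First, you actually derive the second equality in \cref{eq:discrete-min-lyap} via LP duality and obtain \cref{eq:discrete-minima} from complementary slackness; the paper's proof treats only the first equality and merely asserts the rest. Second, your concern about uniqueness of $\Opt{\Vect{\Cond}}$ and $\Opt{\Vect{\Dflux}}$ is well placed: the paper's proof does not establish it (nor does yours), and for a generic $\ell_{1}$/LP problem it indeed requires an extra structural or genericity hypothesis. Your LaSalle remark about long-time convergence is also additional---neither the statement nor the paper's proof addresses it.
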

\begin{Remark}
  As in the continuous case, for strictly positive $\Vect{\Cond}$ the
  energy functional $\Ene(\Vect{\Cond})$ can be written in analogy to
  the continuous case in the following alternative forms:
  \begin{equation}
    \label{eq:ene-explicit}
    \Ene(\Vect{\Cond})=\frac{1}{2}
    \left(\Dgrad\VoltOp{\Vect{\Cond}}\right)^T
    \Mlength\Mcond\Of{\Vect{\Cond}}\Dgrad\VoltOp{\Vect{\Cond}}
    =\frac{1}{2}\left(\VoltOp{\Vect{\Cond}}\right)^T
    \Matr{\Stiff}\Of{\Vect{\Cond}}\VoltOp{\Vect{\Cond}} \;.
  \end{equation}
\end{Remark}
The proof of the previous proposition requires the following lemma
whose proof can be found in \citet{Ekeland-Teman:1999} (see
also~\cite{Bouchitte-et-al:1997} for the continuous analogue).
\begin{Lemma}
  \label{lemma:duality-ene}
  Given $\Vect{\Cond}\in \REAL^{\Nedge}_{\Nnn}$, non-negative, and
  $\Vect{\Forcing} \in \Image{\Matr{\Diff}}$, the following equalities
  hold:
  \begin{equation}
    \Ene(\Vect{\Cond})=\sup_{\Vect{\Volt}\in \REAL^{\Nnode}}
    \left\{
      \Vect{\Forcing}^T\Vect{\Volt}
      -\frac{1}{2}\Vect{\Volt}^T\Matr{\Stiff}\Of{\Vect{\Cond}}\Vect{\Volt}
    \right\}
    =
    \inf_{\Vect{\Field}\in\REAL_{\Vect{\Cond},\Vect{\Length}}^{\Nnode}}
    \left\{  
      \frac{1}{2}
      \SCAL[\Vect{\Cond},\Mlength]{\Vect{\Dfield}}{\Vect{\Dfield}}
      \ : \ 
      \Matr{\Diff}\Mcond\Of{\Vect{\Cond}}\Vect{\Dfield}=\Vect{\Forcing}
    \right\}\;,
  \end{equation}
  where $\REAL_{\Vect{\Cond},\Vect{\Length}}^{\Nedge}$ denote
  $\REAL^{\Nedge}$ with scalar product  
  $\SCAL[\Vect{\Cond},\Mlength]{\Vect{x}}{\Vect{y}}
  := (\Mcond\Of{\Vect{\Cond}}\Mlength\Vect{x})^{T}\Vect{y}$.
\end{Lemma}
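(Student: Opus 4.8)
The plan is to read the two expressions for $\Ene(\Vect{\Cond})$ as the two halves of a Lagrangian saddle value: the formula appearing in \cref{prop:discrete} is the value of a concave maximization in the nodal variable $\Vect{\Volt}$, while the right-hand side is a convex minimization over the flux $\Vect{\Dfield}$. Introduce
\begin{equation*}
  L(\Vect{\Dfield},\Vect{\Volt})
  :=\frac{1}{2}\SCAL[\Vect{\Cond},\Mlength]{\Vect{\Dfield}}{\Vect{\Dfield}}
  -\Vect{\Volt}^{T}\left(\Matr{\Diff}\Mcond\Of{\Vect{\Cond}}\Vect{\Dfield}-\Vect{\Forcing}\right),
  \qquad
  (\Vect{\Dfield},\Vect{\Volt})\in\REAL^{\Nedge}\times\REAL^{\Nnode}.
\end{equation*}
Since the constraint is affine, $\sup_{\Vect{\Volt}}L(\Vect{\Dfield},\Vect{\Volt})$ equals $\frac{1}{2}\SCAL[\Vect{\Cond},\Mlength]{\Vect{\Dfield}}{\Vect{\Dfield}}$ when $\Matr{\Diff}\Mcond\Of{\Vect{\Cond}}\Vect{\Dfield}=\Vect{\Forcing}$ and $+\infty$ otherwise, so $\inf_{\Vect{\Dfield}}\sup_{\Vect{\Volt}}L$ is exactly the infimum claimed on the right.

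For the other order, I would carry out the (unconstrained, convex) minimization in $\Vect{\Dfield}$ explicitly. Because $\Mlength$ and $\Mcond\Of{\Vect{\Cond}}$ are diagonal and hence commute, a routine completion of squares shows that a minimizer satisfies $\Mcond\Of{\Vect{\Cond}}\left(\Mlength\Vect{\Dfield}-\Matr{\Diff}^{T}\Vect{\Volt}\right)=0$ — i.e. one may take $\Mcond\Of{\Vect{\Cond}}\Vect{\Dfield}=\Mcond\Of{\Vect{\Cond}}\Dgrad\Vect{\Volt}$ — and substituting back collapses $L$ to $\Vect{\Forcing}^{T}\Vect{\Volt}-\frac{1}{2}\Vect{\Volt}^{T}\Matr{\Stiff}\Of{\Vect{\Cond}}\Vect{\Volt}$, using $\Matr{\Stiff}\Of{\Vect{\Cond}}=\Matr{\Diff}\Mcond\Of{\Vect{\Cond}}\Mlength^{-1}\Matr{\Diff}^{T}$. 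Hence $\sup_{\Vect{\Volt}}\inf_{\Vect{\Dfield}}L=\Ene(\Vect{\Cond})$, and the Lemma is precisely the statement that $L$ has no duality gap.

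To conclude I would invoke strong duality for a convex quadratic minimization problem with affine equality constraints, for which no constraint qualification is needed thanks to the quadratic/affine structure (this is in \cite{Ekeland-Teman:1999}): when the primal is feasible the minimum is attained and equals the dual value. The only genuinely delicate point — and the one I expect to be the main obstacle — is the degenerate regime where $\Vect{\Cond}$ has vanishing components, so $\Mcond\Of{\Vect{\Cond}}$ and $\Matr{\Stiff}\Of{\Vect{\Cond}}$ are singular even when $\Matr{\Diff}$ has full rank. There one uses that $\ker\Matr{\Stiff}\Of{\Vect{\Cond}}=\{\Vect{\Volt}:(\Matr{\Diff}^{T}\Vect{\Volt})_{\Iedge}=0\text{ for every }\Iedge\text{ with }\Vect[\Iedge]{\Cond}>0\}$, whence, by symmetry of $\Matr{\Stiff}\Of{\Vect{\Cond}}$, $\Image{\Matr{\Stiff}\Of{\Vect{\Cond}}}=\Image{\Matr{\Diff}\Mcond\Of{\Vect{\Cond}}}$; so the primal is feasible iff $\Vect{\Forcing}\in\Image{\Matr{\Stiff}\Of{\Vect{\Cond}}}$, and when that fails one writes $\Vect{\Forcing}=\Vect{\Forcing}_{1}+\Vect{\Forcing}_{2}$ with $0\neq\Vect{\Forcing}_{2}\in\ker\Matr{\Stiff}\Of{\Vect{\Cond}}$ and observes that $\Vect{\Volt}=s\,\Vect{\Forcing}_{2}$ makes $\Vect{\Forcing}^{T}\Vect{\Volt}-\frac{1}{2}\Vect{\Volt}^{T}\Matr{\Stiff}\Of{\Vect{\Cond}}\Vect{\Volt}=s\,\NORM{\Vect{\Forcing}_{2}}^{2}\to+\infty$, so both sides of the asserted identity are $+\infty$ and it holds trivially. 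When $\Vect{\Cond}>0$ one has $\Image{\Matr{\Stiff}\Of{\Vect{\Cond}}}=\Image{\Matr{\Diff}}$, so under the hypothesis $\Vect{\Forcing}\in\Image{\Matr{\Diff}}$ the nondegenerate case applies directly.
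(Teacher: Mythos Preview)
Your argument is correct. Note, however, that the paper does not actually supply its own proof of this lemma: it simply states that ``the proof \ldots\ can be found in \citet{Ekeland-Teman:1999}'' and cites \cite{Bouchitte-et-al:1997} for the continuous analogue. What you have written is precisely the standard Lagrangian/Fenchel duality argument one would extract from that reference, so your approach is not so much different from the paper's as it is an explicit unpacking of the citation the paper defers to. Your additional care with the degenerate case $\Vect[\Iedge]{\Cond}=0$ (showing both sides equal $+\infty$ when $\Vect{\Forcing}\notin\Image{\Matr{\Stiff}\Of{\Vect{\Cond}}}$) is a genuine refinement, since the paper only ever \emph{uses} the lemma for strictly positive $\Vect{\Cond}$ in the proof of \cref{prop:discrete} but states it for all non-negative $\Vect{\Cond}$.
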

\begin{proof}[Proof of~\cref{prop:discrete}]
  We first prove local existence and uniqueness of the solution
  $\Vect{\Cond}(t)$ of~\cref{eq:ode-cond}.  For any initial data
  $\hat{\Vect{\Cond}}>0$, there exists a neighborhood
  $\Ball[\hat{\Vect{\Cond}}]{}$ where all
  $\Vect{\Cond}\in \Ball[\hat{\Vect{\Cond}}]{}$ are strictly
  positive. Let
  $\VoltOp{\cdot}:\Ball[\hat{\Vect{\Cond}}]{}\mapsto\REAL^{\Nnode}$
  be the operator defined by
  $\VoltOp{\Vect{\Cond}}=
  (\Matr{\Stiff}\Of{\Vect{\Cond}})^{-1}\Vect{\Forcing}$.  By the
  Cayley-Hamilton theorem, the vector
  $(\Matr{\Stiff}\Of{\Vect{\Cond}})^{-1}\Vect{\Forcing}$ is a
  combination of sums, products, and reciprocals of the elements of
  $\Vect{\Cond}$ and hence is infinitely differentiable
  ($\Cont[\infty]$).
  \EF{Analogous of Cohen results}
  Now, all terms on the right-hand side of~\cref{eq:ode-cond} are
  Lipschitz-continuous and we can invoke Banach-Caccioppoli fixed
  point theory to guarantee existence and uniqueness of a solution
  $\Vect{\Cond}(t)$ for $t\in [0,\bar{t}]$, with $\bar{t}$ depending
  on $\hat{\Vect{\Cond}}$ and $\Vect{\Forcing}$. From the mild
  solution of~\cref{eq:ode-cond} the lower bound
  $\Vect{\Cond}(t)\geq e^{-t}\min(\hat{\Vect{\Cond}})$ is easily
  derived, showing that $\Vect{\Cond}$ is strictly positive for
  $\hat{\Vect{\Cond}}>0$. This observation directly yields global
  existence and uniqueness of $\VoltOp{\Vect{\Cond}(t)}$ for all
  $t>0$. 

  To prove~\cref{eq:discrete-lie-der}, take the solution pair
  $(\Vect{\Volt}(t),\Vect{\Cond}(t))$ of~\cref{eq:discrete-volt-cond}
  for $t\in [0,\bar{t}[$, where
  $\Vect{\Volt}(t)=\VoltOp{\Vect{\Cond}(t)}$.  We want to compute the
  time derivative of $\Ene(\Vect{\Cond}(t))$:
  \begin{align}
    \label{eq:ene-der}
    \frac{d}{dt}\Ene(\Vect{\Cond}(t))
    &=
      \frac{1}{2}\frac{d}{dt}
      \left(
      \Vect{\Volt}^T(t)\Matr{\Stiff}\Of{\Vect{\Cond}(t)}\Vect{\Volt}(t)
      \right)\; ,
    \\
    \nonumber
    &=
      \frac{1}{2}
      \left(
      2\Vect{\Volt}^T(t)\Matr{\Stiff}\Of{\Vect{\Cond}(t)}
         \Dt{\Vect{\Volt}}(t)
      +\Vect{\Volt}^T(t)\Matr{\Stiff}\Of{\Dt{\Vect{\Cond}}(t)}
         \Vect{\Volt}(t)
      \right)\; .
  \end{align}
  Differentiating with respect to time the system
  $\Matr{\Stiff}\Of{\Vect{\Cond}(t)}\Vect{\Volt}(t)=\Vect{\Forcing}$
  we obtain:
  \begin{equation*}
    \Matr{\Stiff}[\Dt{\Vect{\Cond}}(t)]\Vect{\Volt}(t) 
    +\Matr{\Stiff}\Of{\Vect{\Cond}(t)}\Dt{\Vect{\Volt}}(t)=0\; .
  \end{equation*}
  Multiplication of both sides by 
  $\Vect{\Volt}^T(t)$ yields:
  \begin{equation*}
    \Vect{\Volt}^T(t)\Matr{\Stiff}\Of{\Vect{\Cond}(t)} 
    \Dt{\Vect{\Volt}}(t)
    =
    -\Vect{\Volt}^T(t)\Matr{\Stiff}[\Dt{\Vect{\Cond}}(t)] 
    \Vect{\Volt}(t)\;.
  \end{equation*}
  Substituting the last equation in~\cref{eq:ene-der}, we can express
  the time derivative of the energy as:
  \begin{equation*}
  \frac{d}{dt}
    \Ene(\Vect{\Cond}(t))
    =-\frac{1}{2}
    \Vect{\Volt}^T(t)\Matr{\Stiff}\Of{\Dt{\Vect{\Cond}}(t)}\Vect{\Volt}(t)
    =-\frac{1}{2}( \Dgrad\Vect{\Volt}(t))^T
    \Mlength\Mcond[\Dt{\Vect{\Cond}}(t)]\Dgrad\Vect{\Volt}(t)\; .
  \end{equation*}
  Adding the time derivative of the mass functional
  $\Wmass(\Vect{\Cond}(t))$, a few simple algebraic manipulations
  yield \cref{eq:discrete-lie-der}.
  
  Global existence and uniqueness of the solution
  vector $\Vect{\Cond}(t)$ at all times $t\in\TimeInt$ now follows.
  Indeed, the time derivative of $\Lyap(\Vect{\Cond}(t))$ is strictly
  negative and thus $\Vect{\Cond}(t)$ is forced to live for all
  times $t>0$ in a weighted $l_1$-ball of $\REAL^{\Nedge}$ with radius
  $\Lyap(\Vect{\Cond}(0))$, as the following inequality states:
  \begin{equation*}
    \NORM[{1,\Mlength}]{\Vect{\Cond}(t)}=\Wmass(\Vect{\Cond}(t))
    \leq\Ene(\Vect{\Cond}(t))+\Wmass(\Vect{\Cond}(t))
    =\Lyap(\Vect{\Cond}(t))\leq\Lyap(\Vect{\Cond}(0)) \; .
  \end{equation*}
  From this, we automatically obtain global existence
  for~\cref{eq:ode-cond} by applying well know results on maximal
  solutions of ODEs~\citep[Corollary 3.2]{Hartman:1982}.

  To prove~\cref{eq:discrete-min-lyap} we use the results of the
  duality~\cref{lemma:duality-ene} to rewrite the energy functional
  $\Ene(\Vect{\Cond})$ in the following variational form:
  \begin{align*}
    \Ene(\Vect{\Cond})
    =
    \inf_{\Vect{\Field}\in\REAL_{\Vect{\Cond}}^{\Nedge}}
    \left\{
      \frac{1}{2}\sum_{\Iedge\in\Edges}
      \Vect[\Iedge]{\Field}^2\Vect[\Iedge]{\Cond}\Vect[\Iedge]{\Length}
      \ : \
      \Matr{\Diff}\Mcond\Of{\Vect{\Cond}}\Vect{\Field} 
      =\Vect{\Forcing}
    \right\} \; .
  \end{align*}
  Then, the \LF\ $\Lyap$  can be rewritten as:
  \begin{equation*}
    \Lyap(\Vect{\Cond}) 
    = 
    \inf_{\Vect{\Field}\in\REAL^{\Nedge}_{\Vect{\Cond}}}
    \frac{1}{2}\sum_{\Iedge\in\Edges}
    \Vect[\Iedge]{\Field}^2\Vect[\Iedge]{\Cond}\Vect[\Iedge]{\Length}
    +
    \frac{1}{2}\sum_{\Iedge\in\Edges}
    \Vect[\Iedge]{\Cond}\Vect[\Iedge]{\Length} \;.
  \end{equation*}
  For any
  $\Vect{\Field}\in\REAL^{\Nedge}_{\Vect{\Cond}},
  \Vect{\Cond}\in\REAL^{\Nedge}_{+}$, by Young inequality we obtain:
  \begin{equation*}
    \sum_{\Iedge\in\Edges}
    \ABS{\Vect[\Iedge]{\Field}\Vect[\Iedge]{\Cond}}
    \Length_{\Iedge}
    =
    \sum_{\Iedge\in\Edges}
    \ABS{\Field_{\Iedge}\Cond_{\Iedge}^{1/2}\Cond_{\Iedge}^{1/2}}
    \leq
    \frac{1}{2}\sum_{\Iedge\in\Edges} 
    \Vect[\Iedge]{\Field}^{2}\Vect[\Iedge]{\Cond}\Vect[\Iedge]{\Length}
    +
    \frac{1}{2}\sum_{\Iedge\in\Edges} 
    \Vect[\Iedge]{\Cond}\Vect[\Iedge]{\Length} \; .
  \end{equation*}
  Since the above inequality holds for any
  $\Vect{\Cond}\in \REAL_+^{\Nedge}$, taking the infimum over the
  $\Vect{\Field} \in \REAL_{\Vect{\Cond}}^{\Nedge}$ yields:
  \MP{Ho sostituito il primo quadrato con un ABS, GIUSTO?}
  \begin{align*}
    &\inf_{\Vect{\Field}\in\REAL_{\Vect{\Cond}}^{\Nedge}}
      \left\{
        \sum_{\Iedge\in\Edges}
        \ABS{\Vect[\Iedge]{\Field}\Vect[\Iedge]{\Cond}}
             \Vect[\Iedge]{\Length}
        \ : \
        \Matr{\Diff}\Mcond\Of{\Vect{\Cond}} \Vect{\Field} 
        = \Vect{\Forcing}
      \right\}   
    \\
    \leq
    &\inf_{\Vect{\Field}\in\REAL_{\Vect{\Cond}}^{\Nedge}}
      \left\{
        \frac{1}{2}\sum_{\Iedge\in\Edges}
        \Field^{2}_{\Iedge}\Cond_{\Iedge}\Length_{\Iedge}
        +
        \frac{1}{2}\sum_{\Iedge\in\Edges}
        \Vect[\Iedge]{\Cond}\Length_{\Iedge}
        \ : \
        \Matr{\Diff}\Mcond\Of{\Vect{\Cond}} \Vect{\Field} 
        = 
        \Vect{\Forcing}
      \right\}
    \\
    =
    &\Lyap(\Vect{\Cond})\qquad\forall\Vect{\Cond}\in\REAL_+^{\Nedge}\; ,
  \end{align*}
  and thus, since $\Dflux_{\iedge}=\Field_{\iedge}\Cond_{\iedge}$, we
  can write:
  \begin{gather*}
    \inf_{\Vect{\Dflux}\in\REAL^{\Nedge}}
    \left\{
      \sum_{\Iedge\in\Edges}\ABS{\Dflux_{\Iedge}}\Length_{\Iedge}
      \ : \
      \Matr{\Diff}\Vect{\Dflux}=\Vect{\Forcing}
    \right\}
    \leq
    \inf_{\Vect{\Cond}\in\REAL_{+}^{\Nedge}}\Lyap(\Vect{\Cond}) \; .
  \end{gather*}
  Now, we let $\Opt{\Vect{\Dflux}},\Opt{\Vect{\Volt}}$ be solutions of
  the problems in~\cref{eq:discrete-min-lyap,eq:discrete-minima}
  ($\Opt{\Vect{\Volt}}$ is in general not unique) and consider the
  vector defined as:
  \begin{equation*}
    \Opt{\Vect[\Iedge]{\Cond}}:=\ABS{\Opt{\Vect[\Iedge]{\Dflux}}} \; .
  \end{equation*}
  We can write the following chain of equalities and inequalities:
  \MP{Cos'\`e $\Lyap_{1}$? non e' mai stata definita prima. Va sosituita
    con $\Lyap$?}
  \begin{equation*}
    \sum_{\Iedge\in\Edges}
    \ABS{\Opt{\Vect[\Iedge]{\Dflux}}}\Length_{\Iedge}
    =
    \inf_{\Vect{\Dflux}\in\REAL^{\Nedge}}
    \left\{
      \sum_{\Iedge\in\Edges}\ABS{\Vect[\Iedge]{\Dflux}}\Length_{\Iedge}
      \ : \
      \Matr{\Diff}\Vect{\Dflux}=\Vect{\Forcing}
    \right\}
    \leq
    \inf_{\Vect{\Cond}\in\REAL_{+}^{\Nedge}}\Lyap(\Vect{\Cond})
    \leq
    \Lyap_{1}(\Opt{\Vect{\Cond}})
    =
    \sum_{\Iedge\in\Edges}
    \ABS{\Opt{\Vect[\Iedge]{\Dflux}}}\Length_{\Iedge} \; ,
  \end{equation*}
  showing that $\Opt{\Vect{\Cond}}$ is a minimum of the \LF\ $\Lyap$
  and that
  $\Vect[\Iedge]{\Dflux}=\Opt{\Vect[\Iedge]{\Cond}}
  \Vect[\Iedge]{\GradVolt}\Of{\Vect{\Cond}}$ solves
  \cref{eq:basis-pursuit}.
\end{proof}

\begin{Remark}
  \MP{E' da lasciare questa?}
  We want to emphasize that, in the continuous case, the identification
  of the metric space where $\Tdens$ belongs to is a crucial step to
  establish the existence of solutions for the \DMK\ model. In fact,
  in the infinite dimensional case, given a strictly positive measure
  $\TdensIni$ there exists a measure $\Tdens$ belonging to an
  arbitrarily small ball of $\TdensIni$ that is not bounded away from
  zero (consider, e.g., the space $\Lspace{1}(\Domain)$ with the
  standard metric).  However, in the finite dimensional setting of
  interest for our current work this difficulty is completely avoided.
\end{Remark}

\section{Numerical discretization}
\label{sec:num-results}

In~\citep{Straszak-Vishnoi:2016} the \PD\ in~\cref{eq:ode-cond} is
discretized with forward Euler time stepping with fix time-step.  In
this case, the authors show that the scheme can be interpreted as an
IRLS scheme. Standard stability limitations characteristic of the
forward Euler method prevent the use of large time-steps.  In
addition, the use of a constant time-step size does not exploit the
expected exponential convergence towards steady state of the dynamical
system.  To overcome these difficulties, we generate the numerical
approximation (minimization) sequence $(\Vect{\Cond}^{\tstep})$ by
discretizing the dynamics~(\ref{eq:discrete-volt-cond}) via the
unconditionally stable backward Euler discretization with variable
step size.  The non-linear system resulting at each time step is
solved by means of an Inexact Newton-Krylov method. The latter
approach allows also the potential exploitation of the sparsity
structures of matrices $\Matr{\Diff}$, $\Mcond\Of{\Vect\Cond}$, and
$\Mlength$, increasing computational efficiency, as seen in the
numerical examples. We describe next our implementation, show some
numerical results on literature benchmarks, discussing some heuristic
explanations of the behavior of the proposed scheme.

\subsection{Backward Euler and Newton method}
\label{sec:Euler-Newton}

Discretization of~\cref{eq:discrete-volt-cond} by the implicit
backward Euler scheme leads to the following sequence of
nonlinear-algebraic equations for
$(\Vect{\Volt}^{\tstepp},\Vect{\Cond}^{\tstepp})$:
\begin{align}
  \nonumber
  &\Matr{\Stiff}\Of{\Vect{\Cond}^{\tstepp}}\Vect{\Volt}^{\tstepp}
  =\Vect{\Forcing} 
  \\[-0.5em]
  \label{ei-p1-p0}
  \\[-0.5em]
  \nonumber
  &\Vect{\Cond}^{\tstepp}
  = \Vect{\Cond}^{\tstep} +\Deltat[\tstep] 
    \left(
       \Mcond\Of{\Vect{\Cond}^{\tstepp}}\ABS{\Dgrad \Vect{\Volt}^{\tstepp}}
       -\Vect{\Cond}^{\tstepp}
    \right) \; .
\end{align}
We restate the nonlinear system to be solved at each time step $k$ as
the problem of finding the zero $\Vect{z}=(\Vect{\Volt},\Vect{\Cond})$
of the function:
\begin{equation}
  \label{eq:non-linear-cond}
  \Fnewton(\Vect{\Volt},\Vect{\Cond})=
  \begin{pmatrix}
    \Fnewton_1(\Vect{\Volt},\Vect{\Cond})\\ 
    \Fnewton_2(\Vect{\Volt},\Vect{\Cond})
  \end{pmatrix}
  =
  \begin{pmatrix}
    \Matr{\Stiff}\Of{\Vect{\Cond}}\Vect{\Volt}-\Vect{\Forcing}\\
    \Vect{\Cond} -\Deltat[\tstep] 
    \left(
       \Mcond\Of{\Vect{\Cond}}\ABS{\Dgrad\Vect{\Volt}}-\Vect{\Cond}
    \right)
    -\Vect{\Cond}^{k}  
  \end{pmatrix}
  =0 \; .
\end{equation}
Denoting with $\Niter$ the nonlinear iteration index, Newton method for
finding the zero of the above function can be written as:
\begin{align}
  \nonumber
  \Matr{J}(\Vect{z}^{\Niter})\Vect{s}
  &= -\Fnewton(\Vect{z}^{\Niter})\\[-0.5em]
  \label{newton}\\[-0.5em]
  \nonumber
  \Vect{z}^{\Niterpp} &= \Vect{z}^{\Niter} + \Vect{s} \; .
\end{align}
The Jacobian matrix is given by:
\MP{Non capisco il conto dello Jacobiano. Cos'e' il ``2'', e la
  matrice $G$? Il gradiente? Il blocco 1,2 non dovrebbe essere
  $A^TC'(\mu)W^{-1}Au$?  anche il blocco 2,1 non capisco come mai c'e'
  il trasposto. Inoltre in $D_3$ non ci dovrebbe essere $\mu$.
  Ho rifatto i conti. Devi verificarli. La matrice $D_1$ \`e la $C$!!!}
\begin{align}
  \label{eq-jacobian-pp}
  \Matr{\Jac}(\Vect{\Volt}, \Vect{\Cond}) 
  &=
  \begin{pmatrix}
    \Derpar[\Vect{\Volt}]{\Fnewton_1(\Vect{\Volt},\Vect{\Cond})} &
    \Derpar[\Vect{\Cond}]{\Fnewton_1(\Vect{\Volt},\Vect{\Cond})}
    \\
    \Derpar[\Vect{\Volt}]{\Fnewton_2(\Vect{\Volt},\Vect{\Cond})} &
    \Derpar[\Vect{\Cond}]{\Fnewton_2(\Vect{\Volt},\Vect{\Cond})}
  \end{pmatrix} \; ,\\
  &=
  \begin{pmatrix}
    \Matr{\Stiff}\Of{\Vect{\Cond}}
    &
    \Matr{\Diff}\Matr{G}\Of{\Vect{\Volt}}
    \\
    -\Deltat[\tstep]\Mcond\Of{\Vect{\Cond}}\Matr{D_1}\Of{\Vect{\Volt}}
    \Dgrad
    &
    \Matr{D_2}\Of{\Vect{\Volt}} 
  \end{pmatrix} \; ,
      \nonumber
\end{align}
where $\Matr{G}$, $\Matr{D_1}$, and $\Matr{D_2}$ are diagonal matrices
given by:
\begin{gather*}
  \Matr{G}\Of{\Vect{\Volt}}=\Diag{\Dgrad\Vect{\Volt}}
  \quad
  \Matr{D_1}\Of{\Vect{\Volt}}=\Diag{\Sign(\Vect{\Dgrad\Vect{\Volt}})}
  \quad
  \Matr{D_2}\Of{\Vect{\Volt}}
  =\Diag{\Vect{1}-\Deltat[\tstep]
  \left(
    \ABS{\Vect{\Dgrad\Vect{\Volt}}}-\Vect{1}
  \right)} \; .
\end{gather*}
Here, the $\Sign{}$ function is again intended element-by-element and
$\Vect{1}$ indicates the vector with all elements equal to one.
Assuming that all the matrices are evaluated at $\Vect{z}^{\Niter}$,
the Newton linear system can be reduced to the $\Nnode\times\Nnode$
symmetric system given by:
\begin{subequations}
  \label{eq:red-Newton-system}
  \begin{align}
    \label{eq:RN-lin-system}
    \Matr{M}\Vect{s}_1
    &:=
      \left(\Matr{\Stiff}+\Deltat[\tstep]
      \Matr{\Diff}\Matr{G}\Matr{D_2}^{-1}\Mcond\Matr{D_1}\Dgrad
      \right)s_1
      =
      \Matr{\Diff}\Matr{G}\Matr{D_2}^{-1}\Fnewton_2-\Fnewton_1 \\
    \label{eq:RN-second}
    \Vect{s}_2
    &=
      \Matr{D_2}^{-1}
      \left(
      \Deltat[\tstep]\Matr{D_2}^{-1}\Mcond\Matr{D_1}\Dgrad\Vect{s}_1
      -\Fnewton_2
      \right) \; .
  \end{align}
\end{subequations}
Since $\Vect{\Cond}$ and $\Vect{\Volt}$ appear linearly, the symmetric
matrix
$\Matr{M}$ can be rewritten as 
$\Matr{M}=\Stiff[\Vect{\Tilde{\Cond}}]$ with
$\Vect{\Tilde{\Cond}}=\{\Vect[\Iedge]{\Tilde{\Cond}}\}$ defined as:
\begin{equation*}
  \Vect[\Iedge]{\Tilde{\Cond}}=
  \Vect[\Iedge]{\Cond}
  +\Deltat[\tstep]
    \frac{\Vect[\Iedge]{\Cond}\ABS[\Iedge]{\Vect{\Dgrad\Volt}}}%
    {1-\Deltat[\tstep]\left(\ABS[\Iedge]{\Vect{\Dgrad\Volt}}-1\right)}
  =\Vect[\Iedge]{\Cond}
  \frac{1+\Deltat[\tstep]}%
  {1-\Deltat[\tstep]\left(\ABS[\Iedge]{\Vect{\Dgrad\Volt}}-1\right)} \; ,
\end{equation*}
Note that, if $\Deltat[\tstep]$ small enough, $\Matr{D_2}$ is strictly
positive and thus always invertible, and matrix $\Matr{M}$ is
symmetric and positive definite. As supported also by the numerical
evidence, in practice this requirement is always met since at large
times $\ABS[\Iedge]{\Vect{\Dgrad\Vect{\Volt}}}\approx\Vect{1}$ and we
start from $\Vect{\TdensIni}>0$.  Thus the time-step size can be
progressively increased as time advances.

\paragraph{Linear solver}
We implemented an Inexact Newton method with a Preconditioned
Conjugate Gradient (PCG) solver for the linear
system~\eqref{eq:RN-lin-system} and standard variable forcings as
suggested in~\citet{Dembo:1982}. The matrix vector product
$\Matr{M}\Vect{v}$ is performed sequentially from right to left.
Matrix $\Matr{M}$ is formed explicitly only when calculating the
preconditioner, thus allowing potential exploitation of sparsity
in matrix $\Matr{\Diff}$.

In our preliminary implementation, preconditioning is obtained by
calculating the Cholesky decomposition of $\Matr{M}$ and reusing it
until degradation of the PCG performance is detected (number of PCG
iterations larger than a specified threshold). The exit criterion is
based on the preconditioned residual~\citep{Kelley:2018}.  Note that,
as time progresses, some values of $\Vect[\Iedge]{\Cond}$ tend to zero,
drastically increasing the ill-conditioning of the linear system. To
cope with this problem, we use the spectral preconditioner developed
in~\citet{Martinez-et-al:2017}, which makes efficient use of partial
eigenspectrum information.

\paragraph{Time stepping strategy}
At each time step, we start the Newton iteration with the converged
values at the previous time step
$\Vect{z}^{0}=(\Vect{\Volt}^{\tstep},\Vect{\Cond}^{\tstep})$.  At
every time step the value of $\Deltat[\tstep]$ is doubled and a check
on the positivity of the entries in $\Matr{D_2}$ is performed. If this
check fails, $\Deltat[\tstep]$ is decreased by a factor 2.  The
initial $\Deltat[0]$ is typically unitary.  We consider that
equilibrium (or time-convergence) is achieved when:
\begin{equation}
  \label{eq:vara-cond}
 \Var(\Vect{\Cond}\Of{t^\tstep})
 =
 \frac{\NORM{\Vect{\Cond}^{\tstepp}-\Opt{\Vect{\Cond}}}}
      {\Deltat[\tstepp] \NORM{\Opt{\Vect{\Cond}}}} <\TolTime \; .
\end{equation}
In our numerical experiments we set $\TolTime=5\times 10^{-8}$.

\subsection{Numerical Experiments}

We run the first benchmark case suggested in~\citet{Yang-et-al:2010},
which considers problem~\cref{eq:basis-pursuit} with unit weights
($\Vect{\Length}=\Vect{1}$).  The $\Nnode\times\Nedge$ matrix
$\Matr{\DiffTest}$ is dense and randomly generated with elements
extracted from the standard normal distribution, normalized so that
each row has unit Euclidean norm.  The exact sparse solution
$\Vect{\Opt{\DfluxTest}}$ is randomly generated with a random support
of given dimension $\Nnz$ and entries extracted from a uniform
distribution in the interval $[-10,10]$. The right-hand side is
generated as
$\Vect{\ForcingTest}=\Matr{\DiffTest} \Vect{\Opt{\DfluxTest}}$.  We
build a sequence of four problems starting with
$(\Nnode,\Nedge,\Nnz)=(250,25000,5)\times 2^{i-1}$, $i=1,2,3,4$. Note
that the pre-imposed sparsity level $\Nnz$ of the thought solution has
no influence on the computational efficiency of our algorithm.

Simulation metrics include convergence towards steady state
($\Var(\Vect{\Cond}\Of{t^\tstep})$), relative error on the exact solution,
and constraint error in the dual \BP\ problem
(see~\cref{eq:discrete-min-lyap}), the last two defined as:
\begin{equation*}
  \Err_{\Opt{\Vect{\DfluxTest}}}(\Vect{\DfluxTest}\Of{t^{\tstep}}):=
  \frac{
    \|\Vect{\DfluxTest}\Of{t^{\tstep}}-\Opt{\Vect{\DfluxTest}}\|
  }{
    \|\Opt{\Vect{\DfluxTest}}\|
  }
  \quad
  \Err_{\mbox{Dual}}(\Vect{\Volt}\Of{t^{\tstep}})
  =\ABS{\left(\NORM[\infty]{\Matr{\Diff}^T\Vect{\Volt}\Of{t^{\tstep}}}
        -1\right)} \; .
  \end{equation*}
All the runs are conducted on a 3.4GHz Intel-I7 (1-core) CPU.

\paragraph{\PD[full]}

\begin{figure}
 \centerline{
    \includegraphics[width=0.8\textwidth,
    trim={0.0cm 6cm 0.0cm 0cm},clip
    ]{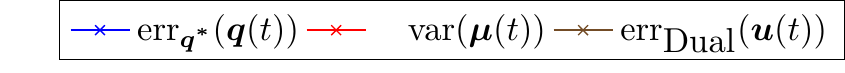}
  }
  \foreach \nrow/\nedge in {250/25000, 500/50000, 1000/100000, 2000/200000} { 
    \centerline
    {
      \includegraphics[width=0.49\textwidth,
      trim={0.0cm 0.1cm 0.0cm 0.8cm},clip
      ]{figs/runs/random_\nrow_\nedge/output/plots/errors_bp.pdf}
      \quad
      \includegraphics[width=0.48\textwidth,
      trim={0.0cm 0.1cm 0.0cm 0.8cm},clip
      ]{figs/runs/random_\nrow_\nedge/output/plots/cpu_errors_bp.pdf}
    }
  }
  \caption{Numerical results for \PD. Left column: time ($t$) evolution of
    $\Var(\Vect{\Cond}\Of{t})$,
    $\Err_{\Opt{\DfluxTest}}(\Vect{\DfluxTest}\Of{t})$,
    and  $\Err_{\mbox{Dual}}(\Vect{\Volt}\Of{t})$ for the four test
    problems. The values are sampled at every time-step $t^{\tstep}$
    (indicated with crosses) and completed by linear interpolation.
    Right column: values of the three simulation metrics as a function
    of computational time (seconds of CPU).}
  \label{fig:PD}
\end{figure}

\Cref{fig:PD} reports the results for the four different problems.
We first observe that the three metrics have a common time-evolution,
differing only by a multiplicative constant in the log-log plots in
the left column. Future studies will be devoted to analyze their
relationship so that better error estimators can be used to develop
more efficient time-step strategies.
Secondly, we would like to emphasize the distance between time-steps
that increases progressively as steady state is approach, a
consequence of our empirical time-step adaptation strategy. 

\MP{Enrico: qui sto inventando perche' non ho i dati. verifica.}

The column on the right, showing the behavior of the three metrics
with respect to CPU time, shows that some time steps are less CPU
efficient than others, as indicated by visibly smaller slopes of the
interpolation lines between consecutive metric samples in particular
towards the lower accuracy levels. To discuss this occurrence, we
focus on the first problem set (top-right figure) and look at the
16-th time step ($t_{CPU}\in[19,20]$~sec). The empirical time-stepping
strategy initially overestimated $\Deltat[16]$ and this forced a
re-assembly of the dense matrix and its Choleski factorization for a
number of times, thus drastically contributing to the loss of
computing time.  The development of problem-adapted preconditioning
strategies such as those proposed by
\citet{Fountoulakis:2013,Dassios:2015} will be considered in the
future for better performance.

\section{Conclusions}

We have presented an efficient numerical implementation of the
Physarum dynamics algorithm using a dynamic Monge-Kantorovich
perspective for the solution of general basis pursuit problems. We
show existence and uniqueness of the solution for all times and
introduce a Lyapunov functional with decreasing derivative along the
trajectories.

Our current implementation, based on combining implicit time-stepping
with Inexact Newton-Krylov methods, is shown to be accurate, robust
and efficient in solving basis pursuit benchmark problems taken from
the literature.

\section*{Acknowledgments}
This work was partially funded by the the UniPD-SID-2016 project
“Approximation and discretization of PDEs on Manifolds for
Environmental Modeling” and by the EU-H2020 project
``GEOEssential-Essential Variables workflows for resource efficiency
and environmental management'', project of ``The European Network for
Observing our Changing Planet (ERA-PLANET)'', GA 689443.

\bibliographystyle{siam}
\bibliography{Strings,biblio}

\bibliographystyle{hsiam}
\bibliography{Strings,biblio}

\end{document}